\newtheorem{thm}[subsection]{Theorem}
\newtheorem{defn}[subsection]{Definition}
\newtheorem{prop}[subsection]{Proposition}
\theoremstyle{definition}
\numberwithin{equation}{section}
\begin{document}

\title[Weyl-von Neumann-Berg theorem for  quaternionic operators ]{Weyl-von Neumann-Berg theorem for  quaternionic operators  }

\author{G. Ramesh}

\address{Department of Mathematics\\I. I. T. Hyderabad,  Kandi (V)\\ Sangareddy (M), Medak (D)\\ Telangana , India-502 285.}

\email{rameshg@iith.ac.in}

\thanks{}
\subjclass[2010]{47S10, 46S10, 47N50}

\date{\today}

\keywords{ quaternionic Hilbert space, normal operator, compact operator, right eigenvalue, Weyl-von Neumann-Berg theorem}

\begin{abstract}
We prove the Weyl-von Neumann-Berg theorem for quaternionic right linear operators (not necessarily bounded) in a quaternionic Hilbert space: Let $N$ be a right linear normal (need not be bounded) operator in a quaternionic separable Hilbert space $H$. Then for a given $\epsilon>0$ there exists a compact operator $K$ with $\|K\|<\epsilon$ and a diagonal operator $D$ on $H$ such that
$N=D+K$.
\end{abstract}
\maketitle
\section{Introduction}
Herman Weyl proved  that every bounded self-adjoint operator on a separable Hilbert space is sum of a diagonal operator and a compact operator. Later, von-Neumann observed that the compact operator can be replaced by a Hilbert-Schmidt operator with arbitrary small norm and the boundedness of the operator can be dropped. Afterwards, Berg extended this result to the case of normal operators defined in a separable complex Hilbert space \cite[Theorem 1]{berg}. Now, this theorem is well known as Weyl-von Neumann-Berg theorem. We refer \cite{davidson,Halmos10probs} for more details on this result.

In this note we prove the Weyl-von Neumann-Berg theorem for right linear normal operators defined in a separable quaternionic Hilbert space with the observation that given  a right linear  operator in a quaternionic Hilbert space can be  associated to  a complex linear  operator which preserves certain properties of the original operator. This can be done if there exists an anti self-adjoint unitary operator which commutes with the given operator. 
There always exists such an operator in case if the operator is normal (see \cite[Theorem 5.9, Proposition 3.11]{Ghiloni} for details for the case of bounded operators). The unbounded case is discussed in \cite{Rameshandsanthosh3} in a general setting. Using the above mentioned technique and some more auxiliary results, we obtain the result for the case of quaternionic operators.

In the remaining part of this section we recall necessary definitions and basic results of quaternionic Hilbert spaces and right linear operators on such spaces. In the second section we prove the main result.
\section{Preliminaries}

We denote the division ring of real quaternions by $\mathbb H$. If $q\in \mathbb H$, then $q = q_{0} + q_{1}i+q_{2}j+q_{3}k$, where $q_{r} \in \mathbb{R}$ for $r = 0, 1, 2, 3$ and $i,j,k$ satisfy the following conditions:
\begin{equation*}			
 i^{2}=j^{2}=k^{2}=-1=ijk.
\end{equation*}
 The conjugate of $q$ is $ \overline{q} = q_{0}-q_{1}i-q_{2}j-q_{3}k$ and $ |q| := \sqrt{q_{0}^{2}+q_{1}^{2}+q_{2}^{2}+q_{3}^{2}}$. The imaginary part of $\mathbb{H}$ is defined by  $\text{Im}(\mathbb{H}) = \left\{ q \in \mathbb{H} : q = -\overline{q}\right\}.$ The set of all unit imaginary quaternions is denoted by $\mathbb{S}$, that is $ \mathbb{S}:= \left\{ q \in \text{Im}(\mathbb{H}): |q| = 1 \right\}$ and  the unit sphere of $H$ by $S_H$.

Here we list out some of the properties of quaternions, which we need later.

  For $p,q \in \mathbb{H}$, we have $ \overline{p.q} = \overline{q}. \overline{p},\; |p.q| = |p|.|q|$ and $|\overline{p}| = |\overline{q}|$.  Define  relation on $\mathbb{H}$ as $p \sim q$ if and only if $ p = s^{-1}qs $ for some $ 0\neq s \in \mathbb{H} $.  Then $''\sim''$ is an equivalence relation and the  equivalence class of $p$ is $  [p]:= \{s^{-1}qs : 0 \neq s \in H\}$. For each $m \in \mathbb{S}, \mathbb{C}_{m}: =\left\{ \alpha + m \beta : \alpha, \beta \in \mathbb{R} \right\} $ is a real subalgebra of $\mathbb{H}$ and is called as the slice complex plane. The upper half slice complex plane is defined by $\mathbb C^{+}_m:={\{\alpha+m\beta: \alpha \geq 0,\; \beta \in \mathbb R}\}$.  For $ m \neq \pm{n},$ we have $\mathbb{C}_{m} \cap \mathbb{C}_{n} = \mathbb{R}$ and $\mathbb{H} = \displaystyle \cup_{m \in \mathbb{S}} \mathbb{C}_{m}$.

A right $\mathbb H$-module $H$ is called a quaternionic pre-Hilbert space if there exists
a Hermitian quaternionic scalar product $\langle\cdot\rangle: H\times H\rightarrow \mathbb H$
satisfying the following properties:
\begin{enumerate}
\item  $\langle u,vp + wq\rangle = \langle u,v\rangle \, p + \langle u,w\rangle \,q$  for all  $u, v,w \in H$ and $p, q \in \mathbb H$
\item  $\langle u,v\rangle = \overline{\langle v,u\rangle}$  for all $u, v \in H$
\item $\langle u,u\rangle\geq 0 $ for all $u\in H$ and $\langle u,u\rangle=0$ iff $u=0$.
\end{enumerate}

 Define $ \| u \|
= {\left\langle u, u \right\rangle}^{\frac{1}{2}}, $ for every $ u \in H.$  Then $\|\cdot\|$ is a norm.  If the normed space $ (H, \|\cdot\|) $ is  complete, then $H$ is called a quaternionic Hilbert space.

We say $H$ is separable if there exists a countable orthonormal basis for $H$. Let ${\{\phi_r:r\in \mathbb N}\}$ be an orthonormal basis for $H$. Then every $x\in H$ has the representation:
\begin{equation*}
x=\displaystyle \sum_{n=1}^{\infty} \phi_n \langle \phi_n,x\rangle.
\end{equation*}

Let $D$ is a right linear subspace of $H$ and $T: D\rightarrow H$ be a map. Then $T$ is said to be right linear if   $T(x+y)=Tx+Ty$  for all $x,y\in D$  and $T(xq)=(Tx)q$ for all $x\in D,\; q\in \mathbb H$. Usually, we denote $D$,
the domain of $T$ by $D(T)$.

The graph of a right linear operator $T$ is denoted by $G(T)$ and is defined as  $G(T) = \left\{(x, Tx) | x \in D(T) \right\}$ . If  the graph $G(T)$ is closed in $H \times H$, then $T$ is said to be a closed operator. Equivalently, if $(x_{n}) \subset D(T)$ with $ x_{n} \to x \in H$ and $Tx_{n} \to y$, then $x \in D(T)$ and $Tx=y$.

	We say  $T$  to be densely defined, if $D(T)$ is dense in $H$. For such an operator, then there exists a unique operator $T^{*} $  with domain $D(T^*)$, where
\begin{equation*}
D(T^*):={\{y\in H: \text{the functional} \; D(T) \ni x \rightarrow \langle y,Tx\rangle \;  \text{ is continuous}}\}
\end{equation*}
and satisfy  $ \left\langle y,  Tx\right\rangle = \left\langle T^{*}y ,x\right\rangle $ for all $ y\in D(T^*)$ and for all $x\in D(T)$.  This operator $ T^{*} $ called the adjoint of $T$.  Clearly, $T^*$ is right linear. Furthermore, $T^*$ is always closed irrespective of $T$.

We denote  the class of densely defined closed right  linear operators in $H$ by $\mathcal{C}(H)$.

	Let $S, T \in \mathcal{C}(H)$  with domains $D(S)$ and $D(T)$, respectively. Then, $S$ is said to be a restriction of $T$ denoted by $S \subseteq T$, if $D(S) \subseteq D(T)$ and $ Sx = Tx $, for all $x \in D(S)$. In this case, $T$ is called an extension of $S$. We say $S=T$ if $S\subseteq T$ and $T\subseteq S$. In other words, $S=T$ if and only if $D(S)=D(T)$ and $Sx=Tx$ for all $x\in D(T)$.

We define the sum as: $(S+T)(x)=Sx+Tx$ for all $x\in D(T)\cap D(S)$.

Let $T \in \mathcal{C}(H) $ and $S \in \mathcal{B}(H)$, then we say that $S$ commute with $T$ if $ST \subseteq TS$. That is, $STx = TSx$, for all $x \in D(T)$.

	 	Let $T \in \mathcal{C}(H)$.  Then $T$ is said to be self-adjoint if $ T = T^{*}$, anti self-adjoint if $T^{*}= -T$, normal if $ TT^{*} = T^{*}T $ and  unitary if $ TT^{*} = T^{*}T = I$.

A right linear operator $T:H\rightarrow H$ is said to be bounded if there exists a $M>0$ such that
$\|Tx\| \leq M\, \|x\|$ for all $x\in H$. For such an operator the norm is defined by
\begin{equation*}
\|T\| = \sup \left\{ \|Tu\| : u \in S_H \right\}.
\end{equation*}
We denote the space of all bounded right linear operators on $H$ by $\mathcal B(H)$.

By the closed graph theorem, if $T\in \mathcal C(H)$ and $D(T)=H$, then $T\in \mathcal B(H)$. Thus, for $T$ unbounded, $D(T)$ is a proper subspace of $H$.

Let $H$ be a separable Hilbert space with an orthonormal basis ${\{\phi_r: r\in \mathbb N}\}$.
 \begin{enumerate}
 \item Let $T\in \mathcal B(H)$. Then $T$ is said to be
 \begin{itemize}
 \item[(a)] \textit{compact} if ${T(B)}$ is pre-compact for every bounded subset $B$ of $H.$ Equivalently, $\{T(x_{n})\}$ has a convergent subsequence for every bounded sequence $\{x_{n}\} $ of $ H$

     \item[(b)] \textit{Hilbert-Schmidt} if $\|T\|_2:=\displaystyle \sum_{r=1}^{\infty} \|T\phi_r\|^2<\infty$. In fact $\|T\|_2$ does not depend on the orthonormal basis of $H$ and is known as the Hilbert-Schmidt norm of $T$.
     \end{itemize}
\item  If $T\in \mathcal C(H)$ is densely defined, then $T$ is said to be \textit{diagonal} with respect to ${\{\phi_r:r\in \mathbb N}\}$ if
there exists a sequence $(q_r)\subset \mathbb H$ such that $T\phi_r=\phi_r \, q_r$ for each $r\in \mathbb N$.
Note that, here $\phi_r\in D(T)$ for each $r\in \mathbb N$. In this case  the matrix of $T$ with respect to $\mathcal B$ is $(a_{rs})$,
where $a_{rs}=\langle \phi_s,T(\phi_r)\rangle =\delta_{rs}q_{s}$ for each $r,s\in \mathbb N$.
\end{enumerate}
We recall the notion of the spherical spectrum of a right linear operator in a quaternionic Hilbert space.
\begin{defn}\cite[Definition 4.1]{Ghiloni}	
Let  $ T \colon D(T) \to H $ and $ q \in \mathbb{H}.$ Define  $\Delta_{q}(T) \colon D(T^{2}) \to H$ by
	\begin{equation*}
	\Delta_{q}(T):= T^{2}-T(q+\overline{q})+I.|q|^{2}.
	\end{equation*}
	The spherical resolvent of $T$ is denoted by $\rho_{S}(T)$ and is the set of all $q \in \mathbb{H}$ satisfying the following three properties:
	\begin{enumerate}
		\item $N(\Delta_{q}(T)) = \{0\}$
		\item $R(\Delta_{q}(T))$ is dense in $H$
		\item $\Delta_{q}(T)^{-1}\colon R(\Delta_{q}(T)) \to D(T^{2})$ is bounded.
	\end{enumerate}
	The spherical spectrum of $T$ is defined by $\sigma_{S}(T) = \mathbb{H} \setminus \rho_{S}(T)$ .
	\end{defn}
\begin{defn}\label{circularization}\cite[Page 46]{Ghiloni}
Let $m\in \mathbb S$ and let $K$ be a compact subset of $\mathbb C_m$. Then we define the circularization $\Omega_K$ of $K$ in $\mathbb H$ by
\begin{equation*}
  \Omega_K:={\{\alpha+j\beta:\alpha, \beta \in \mathbb R, \alpha +m\beta \in K,\; j\in \mathbb S}\}.
  \end{equation*}	
  \end{defn}	
		
Let  $m \in \mathbb{S}$ and $ J \in \mathcal{B}(H)$ be anti self-adjoint, unitary operator. Define $H_{\pm}^{J_m}:={\{u\in H:Ju=\pm um}\}$. Then $H_{\pm}^{J_m}$ is a non-zero closed subset of $H$. The restriction of the inner product on $H$ to $H_{\pm}^{J_m}$ is a $\mathbb C_m$-valued inner product and with respect to this inner product $H_{\pm}^{J_m}$ is a Hilbert space. In fact, if we consider $H$ as a $\mathbb C_m$-linear space, $H$ has the decomposition: $H=H_{+}^{J_m}\oplus H_{-}^{J_m}$ (see \cite[pages 21-22]{Ghiloni} for details).

The following result is crucial in proving our results.
\begin{prop} \cite[Proposition 3.11]{Ghiloni}\label{extensionrestriction}
		If $T \colon D(T) \subset H^{Jm}_{+} \to H^{Jm}_{+} $ is a $\mathbb{C}_{m}-$ linear operator, then there exists a unique right $\mathbb{H}-$ linear operator $
		\widetilde{T}\colon D(\widetilde{T})\subset H \to H $
		such that $ D(\widetilde{T}) \bigcap H^{Jm}_{+} = D(T), \ J(D(\widetilde{T})) \subset D(\widetilde{T})$ and $ \widetilde{T}(u) = T(u), $ for every $u \in H^{Jm}_{+}.$ The following facts holds:
		\begin{enumerate}
			\item If $T \in \mathcal{B}(H^{Jm}_{+})$, then $\widetilde{T}\in \mathcal{B}(H)$ and $\|\widetilde{T}\| = \|T\|$
			\item $J\widetilde{T} = \widetilde{T} J$.
		\end{enumerate}
		On the other hand, let $V \colon D(V) \to H$ be a right linear  operator. Then $ V = \widetilde{U} $, for a unique bounded $\mathbb{C}_{m}-$ linear operator $ U \colon D(V) \bigcap H^{Jm}_{+} \to H^{Jm}_{+} $ if and only if $J(D(V)) \subset D(V)$ and $JV \subseteq  VJ$.
		
		Furthermore,
		\begin{enumerate}
			\item \label{extadjoint}If $\overline{D(T)} = H^{Jm}_{+}$, then $\overline{D(\widetilde{T})} = H$ and  $\big(\widetilde{T}\big)^{*} = \widetilde{T^{*}}$
			\item \label{extnmulti}If $S \colon D(S) \subset H^{Jm}_{+} \to H^{Jm}_{+}$ is $\mathbb{C}_{m}$- linear, then $\widetilde{ST} = \widetilde{S} \widetilde{T}$
			\item \label{extninverse}If $S$ is the inverse of $T$, then $\widetilde{S}$ is the inverse of $\widetilde{T}.$
		\end{enumerate}
	\end{prop}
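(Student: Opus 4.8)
The plan is to make the splitting $H=H^{Jm}_{+}\oplus H^{Jm}_{-}$ completely explicit and then read off every assertion from it. First I would fix $n\in\mathbb S$ with $nm=-mn$ and observe that right multiplication $u\mapsto un$ is an isometric bijection of $H^{Jm}_{+}$ onto $H^{Jm}_{-}$: indeed $J(un)=(Ju)n=(um)n=-(un)m$, and $(un)(-n)=u$. Hence every $x\in H$ is uniquely $x=u+vn$ with $u,v\in H^{Jm}_{+}$. The one computation I would single out is that for $c\in\mathbb C_m$ one has $cn=n\overline c$; together with $\overline n=-n$ this forces the cross terms in $\langle u+vn,\,u+vn\rangle$ to cancel, giving the Pythagorean identity $\|u+vn\|^2=\|u\|^2+\|v\|^2$. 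I would also record $J(u+vn)=um+(vm)n$, which shows $J$ preserves the splitting.

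Next I would define $\widetilde T(u+vn):=Tu+(Tv)n$ on $D(\widetilde T):=D(T)\oplus D(T)n$. Right $\mathbb H$-linearity and the three defining properties are then immediate: $D(\widetilde T)\cap H^{Jm}_{+}=D(T)$ because $vn\in H^{Jm}_{-}$ forces $v=0$; the identity $J(u+vn)=um+(vm)n$ together with the $\mathbb C_m$-linearity of $D(T)$ gives $J(D(\widetilde T))\subset D(\widetilde T)$; and $\widetilde T|_{D(T)}=T$ by construction. Fact (1) then follows from the two Pythagorean identities applied to $x$ and to $\widetilde Tx$, and Fact (2) is the direct check that both $J\widetilde Tx$ and $\widetilde TJx$ equal $(Tu)m+((Tv)m)n$, using $T(um)=(Tu)m$.

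The heart of the argument is uniqueness, and I would phrase it so that it also yields the converse and the furthermore facts. Let $V$ be any right $\mathbb H$-linear operator whose domain is $J$-invariant, satisfies $D(V)\cap H^{Jm}_{+}=D(T)$, and on which $V=T$ over $D(T)$. Since $D(V)$ is a right $\mathbb H$-submodule and is $J$-invariant, the projections $P_{\pm}x=\tfrac12\big(x\mp (Jx)m\big)$ map $D(V)$ into itself (one checks $JP_{+}x=(P_{+}x)m$, so $P_{+}x\in H^{Jm}_{+}$, and likewise $P_{-}x\in H^{Jm}_{-}$); hence $P_{+}x\in D(T)$ and $P_{-}x\in D(T)n$, whence $D(V)=D(T)\oplus D(T)n=D(\widetilde T)$ and then $V=\widetilde T$ by right linearity. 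The converse is the same bookkeeping run backwards: if $J(D(V))\subset D(V)$ and $JV\subseteq VJ$, then $u\in H^{Jm}_{+}\cap D(V)$ gives $J(Vu)=V(Ju)=(Vu)m$, so $U:=V|_{H^{Jm}_{+}\cap D(V)}$ is $\mathbb C_m$-linear into $H^{Jm}_{+}$, and uniqueness yields $V=\widetilde U$ (with boundedness of $U$ transferring through Fact (1)).

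Finally I would derive the furthermore facts from this correspondence rather than by direct computation. Density of $D(\widetilde T)=D(T)\oplus D(T)n$ is clear once $\overline{D(T)}=H^{Jm}_{+}$, since $u\mapsto un$ is isometric. For the adjoint I would avoid matching unbounded domains head-on: from Fact (2) and the unitarity of $J$ one gets the genuine similarity $\widetilde T=J\widetilde TJ^{-1}$, so $(\widetilde T)^{*}=J(\widetilde T)^{*}J^{-1}$; that is, $(\widetilde T)^{*}$ again commutes with $J$ and has $J$-invariant domain, so by the converse it is $\widetilde U$ with $U=(\widetilde T)^{*}|_{H^{Jm}_{+}}$, and testing $\langle v,\widetilde Tu\rangle=\langle (\widetilde T)^{*}v,u\rangle$ against the dense set $D(T)\subset H^{Jm}_{+}$ identifies $U=T^{*}$, giving $(\widetilde T)^{*}=\widetilde{T^{*}}$. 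Multiplicativity $\widetilde{ST}=\widetilde S\widetilde T$ and the inverse statement follow the same way: $\widetilde S\widetilde T$ is right $\mathbb H$-linear, commutes with $J$, has $J$-invariant domain, and restricts to $ST$ on $H^{Jm}_{+}$, so the converse forces $\widetilde S\widetilde T=\widetilde{ST}$, and specializing to $ST=I$ (and $TS=I$) gives the inverse claim. I expect the genuinely delicate point to be exactly this adjoint step, where the domain equality $D((\widetilde T)^{*})=D(\widetilde{T^{*}})$ must be extracted; routing it through the $J$-commutation and the converse correspondence is precisely what sidesteps a bare unbounded inner-product computation.
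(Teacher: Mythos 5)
The paper does not prove this proposition at all: it is imported verbatim as \cite[Proposition 3.11]{Ghiloni}, so there is no in-paper argument to compare against. Judged on its own, your proof is correct and is essentially the standard one from the cited reference: choosing $n\in\mathbb S$ anticommuting with $m$, identifying $H^{Jm}_{-}=H^{Jm}_{+}n$ via the isometry $u\mapsto un$ (using $cn=n\overline c$ for $c\in\mathbb C_m$), defining $\widetilde T(u+vn)=Tu+(Tv)n$, and getting uniqueness from the $J$-compatible projections $P_{\pm}x=\tfrac12(x\mp(Jx)m)$, which correctly reproduce the splitting and preserve any $J$-invariant right submodule. Deriving the converse, multiplicativity, and the inverse statement from the uniqueness clause is clean and is arguably tidier than redoing componentwise computations each time. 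The one place you should tighten is the adjoint identity: the $J$-commutation argument shows $(\widetilde T)^{*}=\widetilde U$ for \emph{some} $\mathbb C_m$-linear $U$, and testing against $D(T)$ gives $U\subseteq T^{*}$ (i.e.\ $D(U)\subseteq D(T^{*})$ with agreement there), but the reverse inclusion $D(T^{*})\subseteq D((\widetilde T)^{*})$ is not a consequence of that test; it needs the short direct computation $\langle v,\widetilde T(u+wn)\rangle=\langle T^{*}v,u\rangle+\langle T^{*}v,w\rangle n=\langle T^{*}v,u+wn\rangle$ for $v\in D(T^{*})$, which shows the relevant functional is bounded on all of $D(\widetilde T)$. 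With that half-line added, the argument is complete.
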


By (\ref{extadjoint}) and  (\ref{extnmulti}), it follows that if $T_{+}$ is normal, then $T$ is normal. On the other hand, by the uniqueness of the extension it follows that normality of $T$ implies the normality of $T_{+}$.

In particular, if $T\in \mathcal B(H)$ is normal, there exists an anti self-adjoint, unitary $J\in \mathcal B(H)$ such that $TJ=JT$ (see \cite[Theorem 5.9]{Ghiloni} for details). Hence in this case all the statements in Theorem \ref{extensionrestriction} holds true. In case, if $T\in \mathcal C(H)$ is normal, existence of a anti unitary self-adjoint operator $J\in \mathcal B(H)$ such that
$JT\subseteq TJ$ is proved in \cite[Theorem 3.6]{Rameshandsanthosh3}. For the sake of completeness we give the details here.
\begin{thm} \label{commutes}
	Let $T \in \mathcal{C}(H)$ with the domain $D(T) \subseteq H$. Then there exists an anti self-adjoint unitary operator $J$ on $H$ such that $JT\subseteq TJ$.
	\end{thm}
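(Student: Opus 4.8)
The plan is to reduce the unbounded operator $T$ to a bounded one via its bounded transform, apply the bounded normal case \cite[Theorem 5.9]{Ghiloni}, and then carry the resulting symmetry back to $T$. The normality of $T$ (the hypothesis intended here, and the one used in \cite[Theorem 3.6]{Rameshandsanthosh3}) enters at exactly one place, indicated below. Since $T^{*}T=TT^{*}$ is positive and self-adjoint, $I+T^{*}T$ has a bounded everywhere-defined inverse, and I set
\[
C:=(I+T^{*}T)^{-1/2},
\]
a bounded, positive, self-adjoint operator with $0\le C\le I$ and $C^{2}=(I+T^{*}T)^{-1}$; the bounded transform is $Z:=TC$.

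First I would record the standard facts that $C$ maps $H$ into $D(T)$ with $R(C)=D(T)$, that $Z\in\mathcal B(H)$ with $\|Z\|\le 1$, and that $T=ZC^{-1}$ on $D(T)$, where $C^{-1}=(I+T^{*}T)^{1/2}$; these follow from the bounded identity $C\,T^{*}T\,C=T^{*}T(I+T^{*}T)^{-1}=I-C^{2}$. The heart of the argument is to show that $Z$ is a bounded \emph{normal} operator. Since $C$ commutes with $T^{*}T$ one gets $Z^{*}Z=C\,T^{*}T\,C=I-C^{2}$ with no hypothesis on $T$; for $ZZ^{*}=T\,C^{2}\,T^{*}=T(I+T^{*}T)^{-1}T^{*}$ I would use the normality of $T$, namely $T^{*}T=TT^{*}$ together with the identity $T(I+T^{*}T)^{-1}=(I+TT^{*})^{-1}T$, to obtain $ZZ^{*}=(I+TT^{*})^{-1}TT^{*}=I-C^{2}$ as well. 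Hence $Z^{*}Z=ZZ^{*}$, so $Z$ is normal.

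Applying \cite[Theorem 5.9]{Ghiloni} to the bounded normal operator $Z$ produces an anti self-adjoint unitary $J\in\mathcal B(H)$ with $JZ=ZJ$. Taking adjoints and using $J^{*}=-J$ gives $JZ^{*}=Z^{*}J$, so $J$ commutes with $Z^{*}Z=I-C^{2}$, hence with $C^{2}$; since $C^{2}$ is bounded, positive and self-adjoint, its square root $C$ is a norm-limit of real polynomials in $C^{2}$ through the continuous functional calculus, so $JC=CJ$ as well. In particular $J$ maps $R(C)=D(T)$ into itself and $JC^{-1}\subseteq C^{-1}J$ on $D(C^{-1})=D(T)$.

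It then remains to transfer the relation to $T$. For $x\in D(T)$ we have $Jx\in D(T)$, and
\[
JTx=JZC^{-1}x=ZJC^{-1}x=ZC^{-1}Jx=T(Jx),
\]
which is exactly $JT\subseteq TJ$. I expect the main obstacle to be the verification that $Z$ is normal together with the attendant domain bookkeeping: establishing $Z\in\mathcal B(H)$, the two identities $Z^{*}Z=ZZ^{*}=I-C^{2}$ (the only step using $T^{*}T=TT^{*}$, and where the hypothesis is essential), and the passage $T=ZC^{-1}$ while keeping the final commutation valid on all of $D(T)$ rather than on a mere core.
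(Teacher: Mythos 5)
Your proposal follows essentially the same route as the paper: form the bounded transform $\mathcal Z_T = T(I+T^{*}T)^{-1/2}$, use normality of $T$ to conclude $\mathcal Z_T$ is a bounded normal operator, invoke \cite[Theorem 5.9]{Ghiloni} to get the anti self-adjoint unitary $J$ commuting with $\mathcal Z_T$, and recover $JT\subseteq TJ$ from $T=\mathcal Z_T(I-\mathcal Z_T^{*}\mathcal Z_T)^{-1/2}$. The only difference is that you spell out the verification (commutation of $J$ with $C$ via functional calculus and the domain bookkeeping) that the paper dismisses as "easily verified," and you correctly flag that the normality hypothesis, omitted from the statement, is genuinely needed.
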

\begin{proof}
Let $\mathcal Z_T:=T(I+T^*T)^{-\frac{1}{2}}$ be the $\mathcal Z$-transform of $T$ (see \cite[Theorem 6.1]{alpayetal1} for details). Since $T$ is normal, so is $\mathcal Z_T$. Since $\mathcal Z_T$ is bounded, by \cite[Theorem 5.9]{Ghiloni}, there exists an anti self-adjoint, unitary $J\in \mathcal B(H)$ such that $J\mathcal Z_T=\mathcal Z_T \,J$. As $T=\mathcal Z_T(I-\mathcal Z^{*}_T\mathcal Z_T)^{-\frac{1}{2}}$, it can be easily verified that $JT\subseteq TJ$.
\end{proof}

\section{Main Results}
In this section we prove the main result. The following results are useful in proving the theorem.

\begin{thm}\label{separability}
Let $H$ be a quaternionic Hilbert space and $H^{J_m}_{+}$ be the slice Hilbert space of $H$. Then
\begin{enumerate}
\item \label{basesrelation}   if ${\{e_r:r\in \mathbb N}\}$ is an orthonormal basis for $H^{J_m}_{+}$, then  ${\{e_r:r\in \mathbb N}\}$ is an orthonormal basis for $H$. On the otherhand, if ${\{\phi_r:r\in \mathbb N}\}$ is an orthonormal basis of $H$, then ${\{e_r:=\dfrac{(\phi_r-J\phi_r)}{\sqrt{2}}:r\in \mathbb N}\}$ is an orthonormal basis for $H^{J_m}_{+}$
\item \label{separabilityrelation} $H^{J_m}_{+}$  is separable if and only if $H$ is separable.
\end{enumerate}
\end{thm}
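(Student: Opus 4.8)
The plan is to push everything through the decomposition $H = H^{J_m}_+ \oplus H^{J_m}_-$ recalled in the preliminaries. Fix once and for all a unit imaginary quaternion $j \in \mathbb{S}$ with $mj = -jm$, so that $\mathbb{H} = \mathbb{C}_m \oplus \mathbb{C}_m\, j$ as a two-dimensional $\mathbb{C}_m$-vector space. The first step I would carry out is to show that right multiplication $R_j\colon u \mapsto uj$ maps $H^{J_m}_+$ bijectively and isometrically onto $H^{J_m}_-$: if $Ju = um$, then $J(uj) = (Ju)j = umj = -ujm = -(uj)m$, using right linearity of $J$ and $mj = -jm$; moreover $R_j$ preserves the norm since $|j| = 1$ and is inverted by $R_{-j}$. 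Consequently each $v \in H^{J_m}_-$ is uniquely of the form $v = yj$ with $y \in H^{J_m}_+$.

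For the forward implication in (\ref{basesrelation}), let $\{e_r\}$ be an orthonormal basis of $H^{J_m}_+$. Orthonormality in $H$ is immediate, because the $\mathbb{C}_m$-valued inner product on $H^{J_m}_+$ is the restriction of $\langle\cdot,\cdot\rangle$, whence $\langle e_r, e_s\rangle = \delta_{rs} \in \mathbb{R} \subset \mathbb{H}$. For totality over $\mathbb{H}$, I would take $x \in H$, decompose $x = x_+ + x_-$ with $x_\pm \in H^{J_m}_\pm$, and expand $x_+ = \sum_r e_r c_r$ in the $\mathbb{C}_m$-basis with $c_r = \langle e_r, x_+\rangle \in \mathbb{C}_m$. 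Writing $x_- = yj$ with $y = \sum_r e_r d_r$, $d_r \in \mathbb{C}_m$, gives $x_- = \sum_r e_r d_r\, j$, so that $x = \sum_r e_r(c_r + d_r\, j)$ with coefficients in $\mathbb{C}_m \oplus \mathbb{C}_m\, j = \mathbb{H}$; moreover $c_r + d_r\, j = \langle e_r, x\rangle$ by right linearity of the inner product in its second argument. Hence $\{e_r\}$ is an orthonormal basis of $H$.

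Statement (\ref{separabilityrelation}) is then a short consequence. If $H^{J_m}_+$ is separable it carries a countable orthonormal basis, which by the previous paragraph is a countable orthonormal basis of $H$, so $H$ is separable. Conversely, if $H$ is separable then $H^{J_m}_+$, viewed as a subset of the separable metric space $(H, \|\cdot\|)$ with the induced metric, is separable and hence, being a $\mathbb{C}_m$-Hilbert space, has a countable orthonormal basis. To build one explicitly from an orthonormal basis $\{\phi_r\}$ of $H$ I would apply the projection $P_+x = \tfrac12\big(x - (Jx)m\big)$ of $H$ onto $H^{J_m}_+$ along $H^{J_m}_-$; since $P_+$ fixes $H^{J_m}_+$ pointwise and is continuous, the family $\{\sqrt{2}\,P_+\phi_r\} = \{\tfrac{1}{\sqrt2}(\phi_r - (J\phi_r)m)\}$ is total in $H^{J_m}_+$.

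The main obstacle is exactly this converse, explicit construction. In the forward direction orthonormality comes for free, but the projected vectors $\sqrt{2}\,P_+\phi_r$ are in general neither of unit norm nor mutually orthogonal: a direct computation gives $\langle \sqrt{2}\,P_+\phi_r, \sqrt{2}\,P_+\phi_s\rangle = \delta_{rs} + \tfrac12\big(Am + mA\big)$ with $A = \langle J\phi_r, \phi_s\rangle$, which equals $\delta_{rs}$ only when $\mathrm{Re}\,A = 0$ and the imaginary part of $A$ is orthogonal to $m$ for all $r,s$. I would therefore regard the forward direction together with the qualitative equivalence in (\ref{separabilityrelation}) as the essential content, and obtain an orthonormal basis of $H^{J_m}_+$ either by a Gram--Schmidt step applied to $\{P_+\phi_r\}$ or by choosing $\{\phi_r\}$ compatible with the pair $(J,m)$ so that the correction term above vanishes.
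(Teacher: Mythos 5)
Your treatment of the first assertion in (1) and of (2) is correct, and on the first assertion it is actually more complete than the paper's own argument: the paper decomposes $x = x_+ + x_-$ and immediately writes $x = \sum_r e_r\langle e_r, x\rangle$, without explaining why a system that is total in $H^{J_m}_{+}$ over $\mathbb{C}_m$ also captures the component $x_-$. Your observation that $R_j\colon u\mapsto uj$ (for $j\in\mathbb{S}$ anticommuting with $m$) is an isometric bijection of $H^{J_m}_{+}$ onto $H^{J_m}_{-}$, so that $x_- = \sum_r e_r d_r\, j$ with $d_r\in\mathbb{C}_m$, is exactly the missing step, and your identification of the coefficient $c_r + d_r\, j$ with $\langle e_r, x\rangle$ closes the argument. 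Part (2) then follows as you say.

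On the converse half of (1) you and the paper part ways, and your hesitation is the more defensible position. The paper keeps the stated vectors $e_r = (\phi_r - J\phi_r)/\sqrt{2}$, checks orthonormality in $H$ via $(I-J)^{*}(I-J) = (I+J)(I-J) = 2I$, and then asserts that $\tfrac{1}{\sqrt{2}}(I-J)\colon H\to H^{J_m}_{+}$ is onto; but $(I-J)$ is invertible on $H$ with inverse $\tfrac{1}{2}(I+J)$, so $\tfrac{1}{\sqrt{2}}(I-J)$ is a surjection onto $H$, not onto the proper subspace $H^{J_m}_{+}$. Indeed the condition $Je_r = e_r m$ forces $J\phi_r = \phi_r m$, so the stated $e_r$ lie in $H^{J_m}_{+}$ only when the $\phi_r$ already do (for instance $H=\mathbb{H}$, $J$ equal to left multiplication by $i$, $m=i$, $\phi_1=j$ gives $e_1=(j-k)/\sqrt{2}\notin\mathbb{C}_i=H^{J_i}_{+}$). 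You instead work with $\sqrt{2}\,P_+\phi_r = \tfrac{1}{\sqrt{2}}\bigl(\phi_r - (J\phi_r)m\bigr)$, which does land in $H^{J_m}_{+}$ and is total there, and you correctly compute that it need not be orthonormal. The price is that your proposal, like the paper's proof, does not establish the explicit formula claimed in the statement: the Gram--Schmidt fallback yields the existence of a countable orthonormal basis of $H^{J_m}_{+}$, which suffices for (2) and for every later use of the theorem, but the displayed formula for $e_r$ should be regarded as needing correction rather than proof.
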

\begin{proof}
Proof of (\ref{basesrelation}):
Let $\mathcal B_{+}:={\{e_r:r\in \mathbb N}\}$ be an orthonormal basis for $H^{J_m}_{+}$ and $x\in H$. Then $x=x_{+}+x_{-}$, where $x_{\pm}\in H^{J_m}_{\pm}$.  Since the inner product on $H^{J_m}_{\pm}$ is the restriction of the inner product on $H$ to $H^{J_m}_{\pm}$, it follows that $\langle e_r,x\rangle =\langle e_r,x_{+}\rangle +\langle e_r,x_{-}\rangle $. Thus, we have $x=\displaystyle \sum_{r=1}^{\infty} e_r\, \langle e_r,x\rangle$, showing the that $\mathcal  B_{+}$ is an orthonormal basis for $H$. Hence $H$ is separable.

Assume that $\mathcal B:={\{\phi_r:r\in \mathbb N}\}$ is an orthonormal basis for $H$ and  let $\mathcal B_{+}:={\{e_r:r\in \mathbb N}\}$, where $e_r=\dfrac{(\phi_r-J\phi_r)}{\sqrt{2}},\; r\in \mathbb N$. Then
\begin{align*}
\langle e_r,e_s\rangle &=\frac{1}{2}\; \langle (I-J)(\phi_r), (I-J)(\phi_s)\rangle \\
                                   &=\frac{1}{2}\;\langle \phi_r, (I-J)^{*}(I-J)(\phi_s)\rangle \\
                                   &=\frac{1}{2}\;\langle \phi_r, (I+J)(I-J)(\phi_s)\rangle \\
                                    &=\frac{1}{2}\;\langle \phi_r, 2(\phi_s)\rangle \\
                                    &=\langle \phi_r, \phi_s\rangle.
\end{align*}
This shows that $\mathcal B_{+}$ is an othonormal set. It is clear that $\frac{1}{\sqrt{2}}(I-J):H\rightarrow H^{J_m}_{+}$ is  $\mathbb C_m$-linear,  onto isometry and hence unitary. As $e_r=\frac{1}{\sqrt{2}}(I-J)(\phi_r),\ \text{for all}\; r\in \mathbb N$, it follows that $\mathcal B_{+}$ is an orthonormal basis for $H^{J_m}_{+}$.

Proof of (\ref{separabilityrelation}): The proof follows from (\ref{basesrelation}).
\end{proof}
\begin{thm}\label{reqdproperties}
Let $T\in \mathcal C(H)$ and $T_{+}\in \mathcal C(H^{J_m}_{+})$ be such that $\widetilde{T_{+}}=T$ as in Proposition \ref{extensionrestriction}. Then the following statements hold:
\begin{enumerate}
\item \label{compact} $T$ is compact if and only if $T_{+}$ is compact
\item \label{Hilbert-Schmidt} $T$ is Hilbert-Schmidt if and only if $T_{+}$ is Hilbert-Schmidt. In this case, $\|T\|_2=\|T_{+}\|_2$.
\item \label{diagonal}  $T_{+}$ is diagonal if and only if $T$ is diagonal.
\end{enumerate}
\end{thm}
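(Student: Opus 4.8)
The plan is to exploit the fact that a single orthonormal basis can serve both spaces. By Theorem \ref{separability}(\ref{basesrelation}) every orthonormal basis $\{e_r\}$ of $H^{J_m}_{+}$ is simultaneously an orthonormal basis of $H$; since the inner product on $H^{J_m}_{+}$ is the restriction of the one on $H$, and $Te_r=\widetilde{T_{+}}e_r=T_{+}e_r$ for $e_r\in H^{J_m}_{+}$, one has $\|Te_r\|=\|T_{+}e_r\|$ for every $r$. I would first dispose of the Hilbert--Schmidt statement (\ref{Hilbert-Schmidt}) with this: a Hilbert--Schmidt operator is bounded, and boundedness passes between $T$ and $T_{+}$ with equal norm by Proposition \ref{extensionrestriction}(1), so it suffices to compute, in such a basis, $\|T\|_2^2=\sum_r\|Te_r\|^2=\sum_r\|T_{+}e_r\|^2=\|T_{+}\|_2^2$; the two series are finite together and coincide, giving $\|T\|_2=\|T_{+}\|_2$.

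For the compactness equivalence (\ref{compact}) I would argue from the sequential definition. If $T$ is compact it is bounded, hence so is $T_{+}$; a bounded sequence in $H^{J_m}_{+}$ is bounded in $H$, its image under $T$ (equal to its image under $T_{+}$) has a subsequence convergent in $H$, and as $H^{J_m}_{+}$ is closed the limit lies in $H^{J_m}_{+}$, so $T_{+}$ is compact. Conversely, if $T_{+}$ is compact it is bounded and $T=\widetilde{T_{+}}$ is bounded by Proposition \ref{extensionrestriction}(1). Regarding $H$ as a $\mathbb{C}_m$-Hilbert space one has the orthogonal decomposition $H=H^{J_m}_{+}\oplus H^{J_m}_{-}$, which reduces $T$ since $JT=TJ$. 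Fixing $j\in\mathbb S$ with $jm=-mj$, right multiplication $R_j\colon u\mapsto uj$ is an isometric bijection of $H^{J_m}_{+}$ onto $H^{J_m}_{-}$ that commutes with $T$ by right $\mathbb H$-linearity, so $T|_{H^{J_m}_{-}}$ is compact whenever $T_{+}=T|_{H^{J_m}_{+}}$ is; splitting an arbitrary bounded sequence of $H$ along the two summands and extracting successive convergent subsequences then shows $T$ is compact.

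The essential content is the diagonal statement (\ref{diagonal}). The forward direction is immediate: if $T_{+}f_r=f_r\lambda_r$ with $\lambda_r\in\mathbb C_m$ for an orthonormal basis $\{f_r\}$ of $H^{J_m}_{+}$, then by Theorem \ref{separability}(\ref{basesrelation}) the same set is an orthonormal basis of $H$ and $Tf_r=f_r\lambda_r$, so $T$ is diagonal. For the converse I would start from an orthonormal basis $\{\phi_r\}$ of $H$ with $T\phi_r=\phi_rq_r$. Since every quaternion is similar to an element of $\mathbb C_m$, replacing each $\phi_r$ by $\phi_rs_r$ for a suitable unit $s_r$ (which preserves orthonormality) lets me assume $q_r=\lambda_r\in\mathbb C_m$. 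Let $P_{+}$ be the orthogonal projection of $H$ onto $H^{J_m}_{+}$; as a function of $J$ and of right multiplication by $m$, it commutes with $T$ because $T$ commutes with $J$ and, being right $\mathbb H$-linear, with right multiplication by $m$. A short computation then yields $T_{+}(P_{+}\phi_r)=(P_{+}\phi_r)\lambda_r$, so each $P_{+}\phi_r$ is an eigenvector of $T_{+}$; since $P_{+}$ maps onto $H^{J_m}_{+}$ and $\{\phi_r\}$ is total in $H$, the vectors $\{P_{+}\phi_r\}$ are \emph{total} in $H^{J_m}_{+}$. Finally, $T$ diagonal is normal, hence $T_{+}$ is normal, and a normal operator carrying a total set of eigenvectors is diagonalizable: grouping the $P_{+}\phi_r$ by eigenvalue, the eigenspaces for distinct eigenvalues are mutually orthogonal, and choosing an orthonormal basis inside each one assembles an orthonormal eigenbasis of $H^{J_m}_{+}$.

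The hard part will be exactly this converse of (\ref{diagonal}): diagonality of $T$ only supplies eigenvectors $\phi_r$ that need not lie in $H^{J_m}_{+}$ and eigenvalues $q_r$ that need not lie in $\mathbb C_m$, so the crux is to manufacture enough genuine $\mathbb C_m$-eigenvectors of $T_{+}$ within the slice space and to verify they are total. The three delicate points are the normalization of the eigenvalues into $\mathbb C_m$, the commutation of $T$ with the slice projection $P_{+}$ (which forces $P_{+}\phi_r$ to be an eigenvector with the same eigenvalue), and the passage from a total set of eigenvectors to an orthonormal eigenbasis, the last of which relies on the normality of $T_{+}$.
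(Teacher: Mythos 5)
Parts (\ref{compact}) and (\ref{Hilbert-Schmidt}) of your argument are correct (and for (\ref{compact}) more self-contained than the paper, which simply defers the direction ``$T_{+}$ compact $\Rightarrow$ $T$ compact'' to the proof of a result in \cite{Ghiloni1}), and your forward direction of (\ref{diagonal}) agrees with the paper. The genuine gap is in your converse of (\ref{diagonal}), at the step ``since $P_{+}$ maps onto $H^{J_m}_{+}$ and $\{\phi_r\}$ is total in $H$, the vectors $\{P_{+}\phi_r\}$ are total in $H^{J_m}_{+}$.'' That inference is invalid: $\{\phi_r\}$ is total in $H$ only in the sense that its closed \emph{right $\mathbb{H}$-linear} span is $H$, whereas $P_{+}$ is merely $\mathbb{C}_m$-linear (it does not commute with right multiplication by a $j\in\mathbb{S}$ with $jm=-mj$), so you may only conclude that $P_{+}$ of the $\mathbb{H}$-span of $\{\phi_r\}$ is dense, not that the $\mathbb{C}_m$-span of $\{P_{+}\phi_r\}$ is. The claim is in fact false: take $H=\mathbb{H}$ with $Ju=mu$, so $H^{J_m}_{+}=\mathbb{C}_m$, and the orthonormal basis $\phi_1=j$; then $P_{+}\phi_1=\tfrac12(j-Jjm)=\tfrac12(j-mjm)=0$, which is certainly not total in $\mathbb{C}_m$. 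Nothing prevents some, or even all, of the $\phi_r$ from lying in $H^{J_m}_{-}$.

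The repair is short and stays inside your framework. Since $\mathbb{H}=\mathbb{C}_m\oplus j\,\mathbb{C}_m$, the $\mathbb{C}_m$-span of $\{\phi_r\}\cup\{\phi_r j\}$ equals the $\mathbb{H}$-span of $\{\phi_r\}$ and is dense in $H$; applying the continuous $\mathbb{C}_m$-linear surjection $P_{+}$ shows that $\{P_{+}\phi_r\}\cup\{P_{+}(\phi_r j)\}$ is total in $H^{J_m}_{+}$. Moreover $P_{+}(\phi_r j)=(P_{-}\phi_r)j$ and
\begin{equation*}
T\left((P_{-}\phi_r)j\right)=\left((P_{-}\phi_r)\lambda_r\right)j=\left((P_{-}\phi_r)j\right)\overline{\lambda_r},
\end{equation*}
so the added vectors are again eigenvectors of $T_{+}$, with eigenvalue $\overline{\lambda_r}$, and your concluding argument (normality of $T_{+}$, orthogonality of eigenspaces for distinct eigenvalues, orthonormal bases within each eigenspace, discarding the zero vectors among the projections) applies verbatim to the enlarged family. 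With this fix your route --- normalizing the eigenvalues into $\mathbb{C}_m$ by unit similarities, projecting the eigenvectors into the slice, and rediagonalizing --- is genuinely different from the paper's, which instead transports the basis $\{\phi_r\}$ into $H^{J_m}_{+}$ via $e_r=(\phi_r-J\phi_r)/2$ and computes $Te_r$ directly; your version has the merit of not assuming that $(I-J)\phi_r$ already satisfies $J\phi_r=\phi_r m$, which is the delicate point in that computation.
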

\begin{proof}
Clearly, if $T$ is compact, then $T_{+}$ is compact. The other implication follows by  the proof of \cite[Theorem 1.4]{Ghiloni1}. The proof of (\ref{Hilbert-Schmidt}) can be obtained with the following observation: If ${\{e_r:r\in \mathbb N}\}$ is an orthonormal basis for $H^{J_m}_{+}$, then it is also an orthonormal basis for $H$. Since the Hilbert-Schmidt norm does not depend on the orthonormal basis and $T{e_r}=T_{+}e_r$ for each $r\in \mathbb N$, we have,
\begin{equation*}
\|T\|_2^2=\displaystyle \sum_{r=1}^{\infty} \|Te_r\|^2=\sum_{r=1}^{\infty} \|T_{+}e_r\|^2=\|T_{+}\|_2^2.
\end{equation*}

To prove (\ref{diagonal}), let $\mathcal B_{+}={\{e_r:r\in \mathbb N}\}$ be an orthonormal basis for $H^{J_m}_{+}$ such that $T_{+}e_r=e_r\, \lambda_r$ for each $r\in \mathbb N$, where $\lambda_r\in \mathbb C_m$. Since $\mathcal B_{+}$ is also an orthonormal basis for $H$, we have that $Te_r=T_{+}e_r=e_r\lambda_r$ for each $r\in \mathbb N$. Thus $T$ is diagonalizable.

On the other hand, if $T$ is diagonal, there exists an orthonormal basis $\mathcal B:={\{\phi_r:r\in \mathbb N}\}$ and a sequence  $(\mu_r)$ of quaternions such that $T\phi_r=\phi_r\, \mu_r$ for each $r\in \mathbb N$. Let $e_r=\dfrac{\phi_r-J\phi_r}{2}$ for each $r\in \mathbb N$.
Then by Theorem \ref{separability}, $\mathcal B_{+}:={\{e_r:r\in \mathbb N}\}$ is an orthonormal basis for $H^{J_m}_{+}$. Hence
\begin{align*}
T_{+}e_r=Te_r&=\frac{T\phi_r-TJ\phi_r}{2}\\
             &=\frac{\phi_r \mu_r-T\phi_r m}{2}\\
             &=\frac{\phi_r \mu_r-\phi_r \mu_r m}{2}\\
              &=\frac{\phi_r (\mu_r-\mu_r\,  m)}{2}\\
              &=\frac{\phi_r \mu_r(1-  m)}{2}.
\end{align*}
Hence $T_{+}$ is diagonal with respect to $\mathcal B_{+}$.
\end{proof}

Here we recall the Berg's generalization of Weyl-von Neumann theorem:

\begin{thm}\cite[Corollary 2]{berg}\label{bergthmcomplex}

Let $T$ be a (not necessarily bounded) normal operator on the separable (complex) Hilbert space $H$. Then for $\epsilon>0$ there exists a diagonal operator $D$ and a compact operator $K$ with norm less than $\epsilon$ such that $T=D+K$.
\end{thm}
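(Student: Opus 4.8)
The plan is to prove the statement in stages: first reduce the possibly unbounded case to the bounded one, then handle a bounded normal operator by extracting, from its spectral measure, finite families of almost-eigenvectors, and finally assemble these into an orthonormal basis of $H$ by an inductive construction whose accumulated error is a compact operator of norm less than $\epsilon$. Throughout I use the spectral theorem for (possibly unbounded) complex normal operators, which gives a projection-valued measure $E$ on $\mathbb C$ with $T=\int_{\mathbb C}z\,dE(z)$.

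For the reduction, set $H_k:=E(\{z\in\mathbb C:k-1\le|z|<k\})H$ for $k\in\mathbb N$. Each $H_k$ reduces $T$, the restriction $T_k:=T|_{H_k}$ is a bounded normal operator on the separable space $H_k$, and $H=\bigoplus_{k}H_k$. Granting the bounded case, write $T_k=D_k+K_k$ with $D_k$ diagonal on $H_k$ and $K_k$ compact with $\|K_k\|<\epsilon\,2^{-k}$. Then $D:=\bigoplus_k D_k$ is diagonal with respect to the union of the chosen bases; the direct sum $K:=\bigoplus_k K_k$ satisfies $\|K\|=\sup_k\|K_k\|<\epsilon$ and is the operator-norm limit of the finite direct sums $\bigoplus_{k\le N}K_k$, each of which is compact, so $K$ is compact and $T=D+K$. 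Hence it suffices to treat bounded normal $T$.

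For bounded $T$ the engine is the following building block, read off directly from $E$: given any vector $x$ and $\eta>0$, cover the compact set $\sigma(T)$ by finitely many pairwise disjoint Borel cells $R_1,\dots,R_p$ of diameter less than $\eta$, put $y_j:=E(R_j)x$ and choose $z_j\in R_j$. Because $E(R_i)E(R_j)=0$ for $i\ne j$, the nonzero $y_j$ are pairwise orthogonal, $x=\sum_j y_j$, and $\|Ty_j-z_j y_j\|\le\eta\,\|y_j\|$ together with the analogous estimate for $T^{*}$. Thus, after normalisation, one obtains finitely many orthonormal vectors whose span contains $x$ and on which $T$ acts as scalar multiplication up to an error of operator norm at most $\eta$. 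I would then run the Weyl--von Neumann induction: fix an orthonormal basis $\{f_n\}$ of $H$ to force exhaustion, and build an increasing chain $L_0=\{0\}\subset L_1\subset L_2\subset\cdots$ of finite-dimensional subspaces with orthogonal projections $P_n$, where at stage $n$ one applies the building block to the component of $f_n$ orthogonal to $L_{n-1}$ and adjoins to $L_{n-1}$ the span of the resulting vectors, obtaining $L_n$. Writing $W_n:=L_n\ominus L_{n-1}$, one sets $D:=\sum_n P_{W_n}TP_{W_n}$ and chooses the orthonormal basis of each finite-dimensional block $W_n$ to diagonalise the compression $P_{W_n}TP_{W_n}$; then $D$ is a genuine diagonal operator and $K:=T-D$ is the off-block part $\sum_{m\ne n}P_{W_m}TP_{W_n}$. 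The construction guarantees $f_n\in L_n$, so that $\overline{\bigcup_n L_n}=H$.

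The main obstacle is showing that $K$ is compact with $\|K\|<\epsilon$, rather than merely small in norm. A single spectral partition yields at once an operator $D_0=\sum_j z_j E(R_j)$ that is diagonalisable and satisfies $\|T-D_0\|\le\eta$, but $T-D_0$ is in general not compact, so norm-smallness alone is far too weak. The genuine content is that the cumulative off-block coupling must be controlled in the Hilbert--Schmidt, or at least in the norm-approximable-by-finite-rank, sense: one has to estimate $\|P_{m-1}T(P_n-P_{n-1})\|$ for all $m\le n$ and sum the contributions over the whole induction. The delicate point, and the reason the fineness parameters $\eta_n$ must be chosen to decrease rapidly relative to the dimensions $\dim L_{n-1}$ already accumulated, is that enlarging a block can create new couplings to every previously constructed vector; Weyl's argument (and von Neumann's refinement, which upgrades compactness to a Hilbert--Schmidt perturbation of arbitrarily small Hilbert--Schmidt norm) arranges the induction so that $\sum_n\|(I-P_n)TP_n\|_2^2<\epsilon^2$. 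Granting this summability, $K$ is Hilbert--Schmidt with $\|K\|\le\|K\|_2<\epsilon$, hence compact of norm less than $\epsilon$, and $T=D+K$ is the desired decomposition.
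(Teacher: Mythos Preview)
The paper does not supply a proof of this theorem at all: it is quoted verbatim as \cite[Corollary~2]{berg} and used as a black box in the proof of the quaternionic version. There is therefore no ``paper's own proof'' to compare against; your sketch is an attempt to reprove the cited complex result from scratch.

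On the merits of your sketch, the reduction from the unbounded to the bounded case via the spectral annuli $H_k$ is correct, including the compactness of $K=\bigoplus_k K_k$ from $\|K_k\|<\epsilon\,2^{-k}$.

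In the bounded case, however, there is a genuine gap at the step where you ``choose the orthonormal basis of each finite-dimensional block $W_n$ to diagonalise the compression $P_{W_n}TP_{W_n}$''. For a merely normal $T$ the compression $P_{W_n}TP_{W_n}$ is in general \emph{not} normal, hence not unitarily diagonalisable; this step works for self-adjoint operators (Weyl--von~Neumann) but is precisely the obstruction Berg had to overcome. You can sidestep it by declaring $D$ on the approximate eigenvectors $y_j$ via the chosen points $z_j$ rather than via the compression, but then a second issue surfaces: the vectors $y_j=E(R_j)\big((I-P_{n-1})f_n\big)$ produced by your building block have no reason to lie in $L_{n-1}^{\perp}$, since $E(R_j)$ need not commute with $P_{n-1}$. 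Orthogonalising them against $L_{n-1}$ destroys the approximate-eigenvector estimate, so the induction as written does not close. You flag this as ``the delicate point'' but do not resolve it; Berg's own argument avoids the direct Weyl iteration and proceeds instead through a separate device (in his paper, via reduction to spectra lying on curves of finite length), which is why the result carries his name in addition to Weyl's and von~Neumann's.
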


\begin{thm}
Let $T\in \mathcal C(H)$ be normal. Then  for every $\epsilon >0$, there exists a compact operator $K$ with $\|K\|<\epsilon$ and a diagonal operator $D$ on $H$ such that $T=D+K$.
\end{thm}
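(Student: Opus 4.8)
The plan is to reduce the quaternionic statement to Berg's complex theorem (Theorem \ref{bergthmcomplex}) by passing to a slice Hilbert space, and then to transport the resulting decomposition back to $H$. Since $T$ is normal, Theorem \ref{commutes} supplies an anti self-adjoint unitary $J\in\mathcal B(H)$ with $JT\subseteq TJ$; in particular $J(D(T))\subset D(T)$. Fixing $m\in\mathbb S$, Proposition \ref{extensionrestriction} then produces a unique $\mathbb C_m$-linear operator $T_{+}\colon D(T)\cap H^{J_m}_{+}\to H^{J_m}_{+}$ with $\widetilde{T_{+}}=T$. By the remark following that proposition, using parts (\ref{extadjoint}) and (\ref{extnmulti}) together with the uniqueness of the extension, the normality of $T$ is equivalent to the normality of $T_{+}$, so $T_{+}$ is a densely defined closed normal operator on $H^{J_m}_{+}$.

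Next I would verify that Berg's theorem applies to $T_{+}$. Regarding $H^{J_m}_{+}$ as a complex Hilbert space over $\mathbb C_m$, Theorem \ref{separability} guarantees it is separable because $H$ is. Hence Theorem \ref{bergthmcomplex} yields, for the prescribed $\epsilon>0$, a diagonal operator $D_{+}$ and a compact operator $K_{+}$ on $H^{J_m}_{+}$ with $\|K_{+}\|<\epsilon$ and $T_{+}=D_{+}+K_{+}$. Since $K_{+}$ is bounded and therefore everywhere defined, we have $D(D_{+})=D(T_{+})$, a fact I will need when matching domains.

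Then I would lift the two pieces back to $H$ and read off their properties. Put $D:=\widetilde{D_{+}}$ and $K:=\widetilde{K_{+}}$. By Theorem \ref{reqdproperties} (\ref{diagonal}), $D$ is diagonal on $H$; by Theorem \ref{reqdproperties} (\ref{compact}), $K$ is compact; and Proposition \ref{extensionrestriction}(1) gives $\|K\|=\|K_{+}\|<\epsilon$. What remains is the identity $T=D+K$, that is, $\widetilde{D_{+}+K_{+}}=\widetilde{D_{+}}+\widetilde{K_{+}}$.

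I expect this last additivity of the extension map to be the main obstacle, since it is not isolated as a named statement above: only the multiplicative relation and the adjoint relation in Proposition \ref{extensionrestriction} are recorded. I would settle it by the same uniqueness principle that underlies that proposition. Choosing $j\in\mathbb S$ with $mj=-jm$, right multiplication by $j$ carries $H^{J_m}_{+}$ onto $H^{J_m}_{-}$, so every vector of $H$ is uniquely $u+vj$ with $u,v\in H^{J_m}_{+}$, and the extension of a $\mathbb C_m$-linear $S$ acts by $\widetilde{S}(u+vj)=S(u)+S(v)j$. Consequently $\widetilde{D_{+}}+\widetilde{K_{+}}$ is a right $\mathbb H$-linear operator that agrees with $D_{+}+K_{+}=T_{+}$ on $H^{J_m}_{+}$, commutes with $J$, and has $J$-invariant domain, so by the uniqueness clause of Proposition \ref{extensionrestriction} it coincides with $\widetilde{T_{+}}=T$. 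The only delicate point is the bookkeeping of domains for unbounded $T$; but because $K_{+}$ is everywhere defined we have $D(D_{+})=D(T_{+})$, which forces $D(\widetilde{D_{+}})=D(T)$, and the decomposition $T=D+K$ then holds on all of $D(T)$, completing the argument.
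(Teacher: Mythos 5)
Your proposal is correct and follows essentially the same route as the paper's proof: pass to the slice Hilbert space $H^{J_m}_{+}$ via Theorem \ref{commutes} and Proposition \ref{extensionrestriction}, apply Berg's theorem there, and lift $D_{+}$ and $K_{+}$ back using Theorem \ref{reqdproperties}. The only difference is that you explicitly justify the additivity $\widetilde{D_{+}+K_{+}}=\widetilde{D_{+}}+\widetilde{K_{+}}$ and the separability and normality of $T_{+}$, details the paper's proof leaves implicit; your uniqueness argument for the additivity is sound.
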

\begin{proof}
  Let $\epsilon>0$ be given. Since  $T$ is normal, there exists an anti-self-adjoint and unitary operator $J\in \mathcal B(H)$
  such that $JT\subseteq TJ$. Let $T_{+}$ be the unique operator on $H^{J_m}_{+}$ such that $\widetilde {(T_{+})}=T$. Now, by Theorem \ref{bergthmcomplex}, there exists a compact operator $K_{+}$ on $H^{J_m}_{+}$ with $\|K_{+}\|<\epsilon$ and a diagonal operator $D_{+}$ on $H^{J_m}_{+}$ such that $T_{+}=K_{+}+D_{+}$. Hence $T=D+K$, where $D:=\widetilde{D_+}$ and $K:=\widetilde{K_{+}}$. Also, note that $D$ is diagonal operator and $K$ is a compact operator with $\|K\|=\|K_{+}\|<\epsilon$.
\end{proof}

\begin{thm}\label{finitelength}
Let $H:=\ell^2(\mathbb H)$   and  $N\in \mathcal B(H)$ be normal. Let $K$ be a compact subset of $\mathbb C^{+}_m$ such that $K$ is contained in a curve of finite length. Assume that $\sigma_S(N)=\Omega_{K}$. Then there exists a diagonal operator $D$ on $\ell^2(\mathbb H)$ and for any given $\epsilon>0$, there exists a Hilbert-Schmidt operator  $K$ with the Hilbert-Schmidt norm less that $\epsilon$ such that $N=D+K$.
\end{thm}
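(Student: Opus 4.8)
The plan is to run the same slice-space reduction used above in the compact case, but to feed it the sharper complex input that a bounded normal operator whose spectrum sits on a rectifiable curve of finite length is a diagonal operator plus a Hilbert--Schmidt operator of arbitrarily small Hilbert--Schmidt norm. Concretely, I would transport $N$ to its complex slice operator $N_+$, apply the complex Hilbert--Schmidt refinement of Theorem \ref{bergthmcomplex}, and lift the resulting decomposition back to $H$ via Proposition \ref{extensionrestriction} and Theorem \ref{reqdproperties}.

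First I would use the normality of $N$ to obtain, by \cite[Theorem 5.9]{Ghiloni}, an anti self-adjoint unitary $J\in\mathcal B(H)$ with $NJ=JN$, and let $N_+\in\mathcal B(H^{J_m}_+)$ be the unique $\mathbb C_m$-linear operator with $\widetilde{N_+}=N$ furnished by Proposition \ref{extensionrestriction}. As recorded after that proposition, $N_+$ is normal, and by Theorem \ref{separability} the slice space $H^{J_m}_+$ is a separable complex Hilbert space; since $H=\ell^2(\mathbb H)$, it is isometrically $\mathbb C_m$-linearly isomorphic to $\ell^2(\mathbb C_m)\cong\ell^2(\mathbb C)$.

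The crux is the spectral transfer. Using the correspondence between the spherical spectrum of $N$ and the $\mathbb C_m$-spectrum of $N_+$ --- namely that every $\lambda\in\sigma(N_+)\subseteq\mathbb C_m$ yields a point of $\sigma_S(N)$, so that $\sigma(N_+)\subseteq\sigma_S(N)\cap\mathbb C_m=\Omega_K\cap\mathbb C_m=K\cup\overline K$ --- I would conclude that $\sigma(N_+)$ lies in $K\cup\overline K$. Since reflection across the real axis preserves length, $K\cup\overline K$ is again contained in a curve of finite length, so $\sigma(N_+)$ sits on a rectifiable curve of finite length in $\mathbb C_m$. Establishing and justifying this spectral correspondence, and in particular keeping careful track of $\mathbb C^+_m$ versus its conjugate, is where I expect the main difficulty to lie.

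With $\sigma(N_+)$ on a finite-length curve, the complex Hilbert--Schmidt version of the Weyl--von Neumann--Berg theorem applies: for the given $\epsilon>0$ there exist a diagonal operator $D_+$ and a Hilbert--Schmidt operator $K_+$ on $H^{J_m}_+$ with $\|K_+\|_2<\epsilon$ and $N_+=D_++K_+$. Setting $D:=\widetilde{D_+}$ and $K:=\widetilde{K_+}$, additivity of the extension (immediate from its uniqueness in Proposition \ref{extensionrestriction}) gives $N=\widetilde{N_+}=\widetilde{D_++K_+}=D+K$; Theorem \ref{reqdproperties}(\ref{diagonal}) shows $D$ is diagonal, and Theorem \ref{reqdproperties}(\ref{Hilbert-Schmidt}) shows $K$ is Hilbert--Schmidt with $\|K\|_2=\|K_+\|_2<\epsilon$, which is the desired decomposition.
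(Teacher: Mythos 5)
Your proposal is correct and follows essentially the same route as the paper: pass to the slice operator $N_{+}$, transfer the spectral hypothesis to $\sigma(N_{+})$, apply Berg's Hilbert--Schmidt refinement on the complex slice space, and lift back via Proposition \ref{extensionrestriction} and Theorem \ref{reqdproperties}. The only cosmetic difference is that the paper invokes the exact identity $\sigma(N_{+})=\Omega_{K}\cap\mathbb C^{+}_m=K$ from \cite[Corollary 5.13]{Ghiloni}, whereas you settle for the containment $\sigma(N_{+})\subseteq K\cup\overline{K}$ and note this set still lies on a finite-length curve --- both suffice.
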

\begin{proof}
First, observe that, if $N_{+}\in \mathcal B(H^{J_m}_{+})$ is the unique operator such that $N=\widetilde{N_{+}}$, then the spectrum $\sigma(N_{+})$ is $\sigma(N_{+})=\Omega_{K}\cap  \mathbb C^{+}_m=K$ (see \cite[Corollary 5.13]{Ghiloni} for details). Hence by \cite[Theorem 3]{berg}, $N_{+}=D_{+}+K_{+}$, where $D_{+}$ is a diagonal operator,  $K_{+}$ is a Hilbert-Schmidt operator on $H^{J_m}_{+}$. Moreover, for any $\epsilon>0$ we may select $D_{+}$ and $K_{+}$ so that the Hilbert-Schmidt norm of $K$ is less than $\epsilon$.

Let $D$ and $K$ be the unique extensions of $D_{+}$ and $K_{+}$, respectively, as in Proposition \ref{extensionrestriction}. Then $D$ is diagonal, $K$ is Hilbert-Schmidt by Theorem \ref{reqdproperties} and $\|K_{+}\|=\|K\|<\epsilon$ by Proposition \ref{extensionrestriction}.
\end{proof}

In a similar way, we can prove the following result:

\begin{thm}
Let $H=\ell^2(\mathbb H)$ and $N\in \mathcal C(H)$ be normal. Assume that $\sigma_S(N)=\Omega_{K}$, where $K$ is a subset of a rectifiable curve in $\mathbb C_m^{+}$. Then for $\epsilon>0$ there exists a diagonal operator $D$ and a Hilbert-Schmidt operator  $K$ with $\|K\|<\epsilon$ such that $N=D+K$.
\end{thm}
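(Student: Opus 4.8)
The plan is to follow the slice-reduction strategy used in the proof of Theorem \ref{finitelength}, now executed in the unbounded setting. First I would use the normality of $N$ to produce the symmetry operator: by Theorem \ref{commutes} there is an anti-self-adjoint unitary $J\in\mathcal B(H)$ with $JN\subseteq NJ$, so Proposition \ref{extensionrestriction} supplies a unique $\mathbb C_m$-linear operator $N_{+}$ on the slice Hilbert space $H^{J_m}_{+}$ with $\widetilde{N_{+}}=N$. Since $\overline{D(N)}=H$, item (\ref{extadjoint}) gives $\overline{D(N_{+})}=H^{J_m}_{+}$, and by the remark following Proposition \ref{extensionrestriction} the normality of $N$ forces $N_{+}$ to be normal. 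Thus $N_{+}$ is a (not necessarily bounded) normal operator on the complex Hilbert space $H^{J_m}_{+}$.

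Second, I would establish the spectral correspondence $\sigma(N_{+})=K$. The hypothesis $\sigma_S(N)=\Omega_K$ together with $K\subset\mathbb C_m^{+}$ should, by the unbounded analogue of \cite[Corollary 5.13]{Ghiloni}, give $\sigma(N_{+})=\Omega_K\cap\mathbb C_m^{+}=K$. As $K$ lies on a rectifiable curve, the complex spectrum of $N_{+}$ is contained in a rectifiable curve, which is precisely the hypothesis of Berg's Hilbert-Schmidt decomposition theorem \cite[Theorem 3]{berg} for (not necessarily bounded) normal operators.

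Third, I would apply Berg's theorem to $N_{+}$: for the given $\epsilon>0$ there exist a diagonal operator $D_{+}$ and a Hilbert-Schmidt operator $K_{+}$ on $H^{J_m}_{+}$ with $\|K_{+}\|_2<\epsilon$ and $N_{+}=D_{+}+K_{+}$. Then I would transfer the decomposition back to $H$ by taking the unique extensions $D:=\widetilde{D_{+}}$ and $K:=\widetilde{K_{+}}$ of Proposition \ref{extensionrestriction}. Additivity of the extension map gives $N=\widetilde{N_{+}}=\widetilde{D_{+}+K_{+}}=D+K$; here $K_{+}$ is bounded, being Hilbert-Schmidt, so the two summands share the domain $D(D_{+})$ and the sum is unambiguous. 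Finally, Theorem \ref{reqdproperties}(\ref{diagonal}) shows $D$ is diagonal and Theorem \ref{reqdproperties}(\ref{Hilbert-Schmidt}) shows $K$ is Hilbert-Schmidt with $\|K\|_2=\|K_{+}\|_2<\epsilon$.

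The step I expect to be the main obstacle is the spectral correspondence of the second paragraph: \cite[Corollary 5.13]{Ghiloni} is stated for bounded operators, so I would need either to invoke its unbounded counterpart or to verify directly from the definition of $\Delta_q(N)$, using the intertwining $JN\subseteq NJ$, that $q\in\rho_S(N)$ exactly when the slice representative of $q$ lies in the resolvent set $\rho(N_{+})$. A secondary technical point is justifying the additivity $\widetilde{D_{+}+K_{+}}=\widetilde{D_{+}}+\widetilde{K_{+}}$ at the level of domains; since $K_{+}$ is bounded this reduces to the uniqueness and linearity of the extension already recorded in Proposition \ref{extensionrestriction}.
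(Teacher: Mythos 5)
Your proposal is correct and follows essentially the same route as the paper, which simply invokes the technique of Theorem \ref{finitelength} (slice reduction via $J$ and Proposition \ref{extensionrestriction}, spectral correspondence, Berg's decomposition on $H^{J_m}_{+}$, then extension back via Theorem \ref{reqdproperties}). The only adjustment needed is the citation: for the unbounded case the paper uses \cite[Corollary 4]{berg} rather than \cite[Theorem 3]{berg}, and your flagged concern about the unbounded analogue of \cite[Corollary 5.13]{Ghiloni} is a real gap that the paper itself leaves implicit.
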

\begin{proof}

The proof follows by \cite[Corollary 4]{berg} and the technique used in Theorem \ref{finitelength}.
\end{proof}
\bibliographystyle{plain}

\end{document}